\documentclass[11pt]{article}
\usepackage[utf8]{inputenc}
\usepackage[T1]{fontenc}
\usepackage{amsmath,amssymb,amsthm}
\usepackage{empheq}
\usepackage{graphicx}
\usepackage[hidelinks]{hyperref}
\usepackage{geometry}
\usepackage{float}
\usepackage{abstract}
\usepackage{indentfirst}
\usepackage[cal=boondoxo]{mathalpha}
\newtheorem{theorem}{Theorem}[section]

\newtheorem{definition}[theorem]{Definition}
\newtheorem{assumption}[theorem]{Assumption}
\newtheorem{hypothesis}[theorem]{Hypothesis}
\theoremstyle{definition}
\newtheorem{notation}[theorem]{Notation}
\theoremstyle{remark}
\newtheorem{remark}[theorem]{Remark}

\title{\Large\textbf{On the Spread of a Virus and Traveling Waves, Part I}}
\author{Yizhou~Wang$^{1}$\thanks{Yizhou Wang: Methodology and Computational Analysis Lead. Master's Student, Department of Automation for Scientific Research, Faculty of Computational Mathematics and Cybernetics, Lomonosov Moscow State University; email: saipeizyu@gmail.com.}
\quad Akif~Ibragimov$^{2,3}$\thanks{Akif I. Ibragimov: Corresponding author; email: akif@ipng.ru.}\\[0.6em]
\parbox{0.96\textwidth}{\centering\small
$^{1}$Lomonosov Moscow State University, Moscow, Russian Federation\\[0.3em]
$^{2}$Texas Tech University, Lubbock, TX, USA\\[0.3em]
$^{3}$Oil and Gas Research Institute of the Russian Academy of Sciences\\
(OGRI RAS), Moscow, Russian Federation}}
\date{}

\begin{document}
\maketitle

\begin{abstract}
This paper develops a one-dimensional model for a mobile viral component propagating through a spatially distributed susceptible population. Random viral motion produces diffusion, whereas a response to the relative gradient of susceptible density generates directed transport. The susceptible population is stationary on the scales considered and decreases locally through interaction with the virus. An Einstein-type material-balance argument connects the first two moments of random displacement with the macroscopic transport equation. Closure by the logarithmic susceptible-density gradient yields a coupled diffusion-advection system with a conservative viral flux.

The existence of a positive entire traveling wave is proved when directed transport dominates diffusion, and the limiting case of equal transport and diffusion coefficients is constructed separately. The viral component forms a localized single-hump band that vanishes in both far fields, while the susceptible density increases monotonically from a depleted state to a positive background. The traveling coordinate reduces the partial differential equations to an ordinary differential system. Using the susceptible profile as an independent variable produces a first-order linear equation for viral density. Variation of constants gives an explicit relation between the components. A dimensionless transformation then converts the remaining equation into a logistic equation and supplies closed-form profiles. These formulas identify the unique viral maximum and the far-field behavior.

A conservative implicit-explicit finite-difference method provides an independent numerical check. Diffusion is evaluated at the new time level, the gradient-driven flux explicitly at cell interfaces, and susceptible depletion pointwise. Exact traveling-wave values specify the initial data and time-dependent Dirichlet conditions at both endpoints. Thus, the computation tests preservation and translation of the analytical profiles. A calculation with eight hundred spatial subintervals reproduces the localized viral band and the monotone susceptible front. The largest uniform errors are approximately $1.70\cdot10^{-2}$ for viral density and $2.22\cdot10^{-3}$ for susceptible density. The numerical results corroborate the analytical construction and show that the conservative discretization captures the proposed propagation mechanism.
\end{abstract}

\noindent\textbf{Keywords:} Einstein material balance; diffusion-advection system; chemotaxis; susceptible population; traveling wave; conservative finite differences; numerical verification.

\section{Introduction}

Spatially homogeneous compartmental models describe transitions between population groups, while a spatial description is needed to resolve pathogen transport through a nonuniform susceptible population. A reduced transport model can isolate the mechanisms underlying a moving concentration band and the depletion of the population through which it passes. We consider two fields: the density of a mobile viral component and the density of a susceptible population that is stationary on the scales of interest. Their coupling describes diffusion, gradient-driven transport, and local susceptible depletion. The purpose is to analyze this propagation mechanism within a mathematically tractable biomedical setting.

The relation between microscopic random motion and macroscopic diffusion originates in Einstein's description of Brownian motion~\cite{Einstein05}. Displacement-moment formulations are discussed in~\cite{skor,Brownian Motion}, and gradient-dependent macromolecular transport in~\cite{Schurr_2013}. Diffusion equations with drift and associated asymptotic estimates are studied in~\cite{LuanAkif1}. Einstein-type material-balance models are considered in~\cite{ISH}, while the relation between compartmental infection models and chemotactic systems is developed in~\cite{Yizhou}. These works motivate a formulation in which displacement moments determine the macroscopic transport coefficients and a gradient closure specifies the directed flux.

The traveling-band mechanism has a classical antecedent in the analysis of chemotactic bacterial bands by Keller and Segel~\cite{keller-segel}. More directly, Hevage and Ibragimov~\cite{HevageIbragimov2026} give an explicit traveling-wave analysis for the similar diffusion-advection and stationary-depletion geological structure. 

The present study places this structure in a biomedical interpretation and provides an exact-solution numerical benchmark. The analytical profiles are used with explicit attribution to that prior work.

We give the complete material-balance derivation, traveling-wave construction, and parameter correspondence so that the continuum model and the computational problem can be read together. The proofs retain both the supercritical and critical regimes, including the far-field limits and the unique viral maximum. The numerical study is restricted to the supercritical wave with exact initial data and compatible time-dependent Dirichlet values at both endpoints. A conservative implicit-explicit discretization is specified together with its grid, time-step selection, regularization, and error norms. This formulation allows a direct comparison of the computed solution with the known translating profiles.

The paper is organized as follows. Section~\ref{sec:model} introduces the notation and modeling assumptions and derives the diffusion-advection system from the material balances. Section~\ref{sec:waves} constructs the traveling waves and proves that the viral profile has a unique maximum. Section~\ref{sec:numerics} describes the conservative finite-difference method and the complete initial-boundary value problem. Section~\ref{sec:results} compares the numerical solution with the exact profiles. Section~\ref{sec:conclusion} discusses the results and limitations.

\section{Notation and model construction}\label{sec:model}

We restrict attention to a local depletion law proportional to viral density. At a given viral density, the depletion rate has no additional dependence on susceptible density. Together with the assumption of a stationary susceptible population, this specifies the two-component model analyzed below.

\subsection{Notation and geometric assumptions}

\begin{notation}
Let $\mathcal{V}(x,t)\geq0$ denote the density of the mobile viral component and $S(x,t)>0$ the density of the susceptible population. The spatial coordinate $x\in\mathbb{R}$ parameterizes the propagation route, $t\geq0$ denotes time, $\tau>0$ is a small time increment, and $\eta\in\mathbb{R}$ denotes the spatial shift in the material-balance kernel over that increment.
\end{notation}

\begin{assumption}\label{ass:one-dimensional-route}
The propagation route is approximated by a straight line parallel to the $x$ axis. On the scales considered, the viral component undergoes random spatial displacements, whereas the susceptible population has no independent spatial motion and changes only through local interaction with the virus.
\end{assumption}

\subsection{Material balances and displacement moments}

Let $\varphi_{\mathcal{V},S}(x,t,\eta)$ be the probability density of the spatial shift $\eta$ in the material-balance formulation. It satisfies the normalization condition
\begin{equation}
\int_{-\infty}^{+\infty}
\varphi_{\mathcal{V},S}(x,t,\eta)\,d\eta=1.
\label{eq:normalization}
\end{equation}
Its first and second moments are
\begin{equation}
\begin{aligned}
m_{\mathcal{V},S}(x,t)
&=\int_{-\infty}^{+\infty}
\eta\,\varphi_{\mathcal{V},S}(x,t,\eta)\,d\eta,\\
\sigma_{\mathcal{V},S}(x,t)
&=\int_{-\infty}^{+\infty}
\eta^2\,\varphi_{\mathcal{V},S}(x,t,\eta)\,d\eta.
\end{aligned}
\label{eq:moments}
\end{equation}

\begin{hypothesis}\label{hyp:viral-material-balance}
The Einstein-type material balance for viral density is postulated in the form
\begin{equation}
\begin{aligned}
\mathcal{V}(x,t+\tau)
&=\int_{-\infty}^{+\infty}
\mathcal{V}(x+\eta,t)\varphi_{\mathcal{V},S}(x,t,\eta)\,d\eta\\
&\quad+\mathcal{V}(x,t)\,\partial_x m_{\mathcal{V},S}(x,t).
\end{aligned}
\label{eq:viral-material-balance}
\end{equation}
\end{hypothesis}

For the stationary susceptible population, the transition density is the Dirac delta distribution:
\begin{equation}
\varphi_{S,\mathcal{V}}(x,t,\eta)=\delta(\eta),
\qquad m_{S,\mathcal{V}}=0,
\qquad \sigma_{S,\mathcal{V}}=0.
\label{eq:stationary-susceptible-moments}
\end{equation}

\begin{hypothesis}\label{hyp:susceptible-balance}
The susceptible population decreases locally at a rate proportional to viral density. Its material balance is
\begin{equation}
S(x,t+\tau)=\int_{-\infty}^{+\infty}
S(x+\eta,t)\delta(\eta)\,d\eta-\tau k\mathcal{V}(x,t),
\qquad k>0.
\label{eq:susceptible-material-balance}
\end{equation}
\end{hypothesis}

Assume that the first two moments are of order $\tau$ and that the higher-order Taylor remainder is $o(\tau)$ as $\tau\to0$. For sufficiently smooth densities, expansion of \eqref{eq:viral-material-balance} and \eqref{eq:susceptible-material-balance} in $\tau$ and $\eta$ gives
\begin{equation}
\begin{aligned}
\tau\mathcal{V}_t
&=\frac{\sigma_{\mathcal{V},S}}{2}\mathcal{V}_{xx}
+\partial_x\!\left(\mathcal{V}m_{\mathcal{V},S}\right)+o(\tau),\\
\tau S_t&=-\tau k\mathcal{V}+o(\tau).
\end{aligned}
\label{eq:preliminary-balance}
\end{equation}
The expansion is a formal continuum argument under the stated moment and smoothness assumptions. It connects the postulated material balances to the differential system used in the subsequent analysis.

\subsection{Gradient closure and the differential system}

\begin{hypothesis}\label{hyp:gradient-closure}
The first moment in the material-balance kernel obeys the relative-gradient closure
\begin{equation}
\frac{m_{\mathcal{V},S}}{\tau}
=-\chi\frac{S_x}{S}=-\chi(\ln S)_x,
\qquad \chi>0.
\label{eq:drift-closure}
\end{equation}
\end{hypothesis}

Define
\begin{equation}
D:=\lim_{\tau\to0}\frac{\sigma_{\mathcal{V},S}}{2\tau}>0.
\label{eq:diffusion-coefficient}
\end{equation}
Taking the formal limit in \eqref{eq:preliminary-balance} yields the system studied here:
\begin{equation}
\boxed{\begin{aligned}
\mathcal{V}_t&=D\mathcal{V}_{xx}
-\partial_x\!\left(\chi\mathcal{V}\frac{S_x}{S}\right),\\
S_t&=-k\mathcal{V},
\end{aligned}}
\qquad D>0,\quad\chi>0,\quad k>0.
\label{eq:model}
\end{equation}
The first equation describes diffusion and directed viral transport, and the second describes local depletion of the susceptible population. For $S>0$, the viral equation can be written in the conservative form
\begin{equation}
\mathcal{V}_t+J_x=0,
\qquad J=-D\mathcal{V}_x+\chi\mathcal{V}(\ln S)_x.
\label{eq:continuous-flux}
\end{equation}
In this convention, the macroscopic drift velocity is $\chi(\ln S)_x=-m_{\mathcal{V},S}/\tau$: the shift in the integral balance and the direction of the resulting flux have opposite signs. This divergent form was used in the article for numerical implementation in the Section \ref{sec:numerics}. The gradient closure represents effective directed transport toward regions of higher susceptible density. It is a phenomenological assumption of the model. The system is considered where $S>0$, so that the logarithmic gradient is defined. The traveling solutions constructed below satisfy this condition at every finite spatial point and time; positivity for general initial data requires additional analysis.

\section{Traveling waves for the model}\label{sec:waves}

\begin{hypothesis}\label{hyp:biological-regime}
The susceptible population is initially distributed along the propagation route and is depleted locally by interaction with the virus. The virus is the mobile component, and the relative strength of directed transport and diffusive spreading is characterized by $\chi/D$.
\end{hypothesis}

\subsection{Definition of a traveling band and the wave system}

We seek a solution of \eqref{eq:model} in the form
\begin{equation}
\mathcal{V}(x,t)=\mathcal{v}(\zeta),
\qquad S(x,t)=s(\zeta),
\qquad \zeta=x-ct,
\qquad c>0.
\label{eq:traveling-ansatz}
\end{equation}

\begin{definition}\label{def:strong-traveling-band}
The pair \eqref{eq:traveling-ansatz} is called a strong traveling band if $\mathcal{v},s\in C^2(\mathbb{R})$, the inequalities
\begin{equation}
\mathcal{v}(\zeta)>0,\qquad s(\zeta)>0
\label{eq:positivity}
\end{equation}
hold at every finite $\zeta$, and the far-field conditions are
\begin{equation}
\begin{aligned}
\lim_{\zeta\to-\infty}\mathcal{v}(\zeta)
&=\lim_{\zeta\to+\infty}\mathcal{v}(\zeta)=0,\\
\lim_{\zeta\to-\infty}s(\zeta)&=0,
\qquad\lim_{\zeta\to+\infty}s(\zeta)=s_+>0.
\end{aligned}
\label{eq:far-fields}
\end{equation}
In addition, $s$ is strictly increasing and $\mathcal{v}$ has a unique maximum.
\end{definition}

Substitution of \eqref{eq:traveling-ansatz} into \eqref{eq:model} gives the ordinary differential system
\begin{equation}
\begin{aligned}
-c\mathcal{v}'&=D\mathcal{v}''
-\chi\left(\mathcal{v}\frac{s'}{s}\right)',\\
-cs'&=-k\mathcal{v},
\end{aligned}
\label{eq:traveling-system}
\end{equation}
where a prime denotes differentiation with respect to $\zeta$. Introduce
\begin{equation}
d:=\frac{\chi}{D},
\qquad a:=\frac{c^2}{k},
\qquad \alpha:=\frac{a}{D}=\frac{c^2}{kD}.
\label{eq:dimensionless-parameters}
\end{equation}

\subsection{Construction in the supercritical regime}

\begin{theorem}\label{thm:traveling-band}
Let $D,\chi,k,c,s_+>0$ and $d=\chi/D>1$. Then system~\eqref{eq:model} has a family of strong traveling bands defined for all $(x,t)\in\mathbb{R}^2$. The profiles satisfy
\begin{equation}
\mathcal{v}(\zeta)=\frac{\alpha}{d-1}s(\zeta)-c_0s^d(\zeta),
\qquad c_0=\frac{\alpha}{(d-1)s_+^{d-1}}>0,
\label{eq:profile-relation}
\end{equation}
and, for every $A>0$, are given by
\begin{align}
s(\zeta)&=s_+\left(1+A e^{-c\zeta/D}\right)^{-1/(d-1)},
\label{eq:exact-s}\\
\mathcal{v}(\zeta)&=\frac{\alpha}{d-1}s(\zeta)
\left[1-\left(\frac{s(\zeta)}{s_+}\right)^{d-1}\right].
\label{eq:exact-v}
\end{align}
The function $s$ increases strictly from $0$ to $s_+$, while $\mathcal{v}$ increases to a unique maximum and then decreases to zero.
\end{theorem}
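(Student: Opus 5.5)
The plan is to derive the profiles by reducing the traveling system \eqref{eq:traveling-system} to a scalar logistic equation, and then check the resulting closed-form pair directly, so that the theorem's sufficiency does not rest on the formal derivation.

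\textbf{Step 1: integrate the viral equation once.} Integrating the first equation of \eqref{eq:traveling-system} gives
\[
D\mathcal{v}'-\chi\mathcal{v}\frac{s'}{s}+c\mathcal{v}=\mathrm{const}.
\]
I would take the constant to be zero, which is what the far-field decay \eqref{eq:far-fields} suggests.

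\textbf{Step 2: use $s$ as the independent variable.} Since the second equation gives $s'=k\mathcal{v}/c>0$, the function $s$ is strictly monotone wherever $\mathcal{v}>0$. Writing $\mathcal{v}'=(d\mathcal{v}/ds)\,s'$ and dividing by $k\mathcal{v}/c$ yields the linear equation
\[
\frac{d\mathcal{v}}{ds}-\frac{d}{s}\,\mathcal{v}=-\alpha .
\]
Variation of constants gives $\mathcal{v}=\frac{\alpha}{d-1}s+Cs^{d}$, and the requirement $\mathcal{v}\to0$ as $s\to s_+$ fixes $C=-c_0$. This is exactly \eqref{eq:profile-relation}.

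\textbf{Step 3: reduce to a logistic equation.} Substituting \eqref{eq:profile-relation} into $s'=k\mathcal{v}/c$ and using $\alpha k/c=c/D$ gives
\[
s'=\frac{c}{D(d-1)}\,s\bigl(1-(s/s_+)^{d-1}\bigr).
\]
With $w=(s/s_+)^{d-1}$ this becomes $w'=(c/D)\,w(1-w)$. Its positive solutions with $0<w<1$ are $w=(1+Ae^{-c\zeta/D})^{-1}$ for $A>0$, which produces \eqref{eq:exact-s} and \eqref{eq:exact-v}.

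\textbf{Step 4: verify the formulas directly.} I would start from the explicit pair \eqref{eq:exact-s}--\eqref{eq:exact-v}, which is plainly $C^\infty$.
\begin{itemize}
\item The logistic computation in reverse gives $cs'=k\mathcal{v}$.
\item Differentiating \eqref{eq:profile-relation} along $\zeta$ and using the linear equation multiplied by $s'=k\mathcal{v}/c$ gives $D\mathcal{v}'-\chi\mathcal{v}s'/s+c\mathcal{v}=0$.
\item Differentiating that identity once more recovers the first equation of \eqref{eq:traveling-system}.
\end{itemize}
Because of this direct check, the choice of integration constant in Step 1 needs no separate justification.

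\textbf{Step 5: positivity and far fields.} The hypothesis $d>1$ enters here. Since $0<w<1$, we have $0<s<s_+$, and then $\mathcal{v}>0$ follows from $1-w>0$. As $\zeta\to+\infty$, $w\to1$, so $s\to s_+$ and $\mathcal{v}\to0$. As $\zeta\to-\infty$, $w\to0$, so $s=s_+w^{1/(d-1)}\to0$ (using $d>1$) and hence $\mathcal{v}\to0$. Strict monotonicity of $s$ follows from $s'=k\mathcal{v}/c>0$.

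\textbf{Step 6: unique maximum of $\mathcal{v}$.} Write $\mathcal{v}=g(s)$ with
\[
g(s)=\frac{\alpha}{d-1}\bigl(s-s^{d}/s_+^{d-1}\bigr).
\]
Then $g'(s)=\frac{\alpha}{d-1}\bigl(1-d(s/s_+)^{d-1}\bigr)$, which is positive for $s<s_*:=s_+d^{-1/(d-1)}$ and negative for $s>s_*$, with $0<s_*<s_+$. Since $s$ is a strictly increasing bijection of $\mathbb{R}$ onto $(0,s_+)$, there is a unique $\zeta_*$ with $s(\zeta_*)=s_*$. The profile $\mathcal{v}$ increases on $(-\infty,\zeta_*)$ and decreases on $(\zeta_*,\infty)$.

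\textbf{Expected obstacle.} The main difficulty is not any single estimate. It is keeping the derivation and the verification logically separate, so that nothing depends on an unjustified zero integration constant or on assuming monotonicity of $s$ in advance. Doing the direct verification of Step 4 removes this difficulty.
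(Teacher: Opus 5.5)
Your proposal is correct and follows essentially the same route as the paper: zero integration constant, $s$ as the independent variable giving $d\mathcal{v}/ds-(d/s)\mathcal{v}=-\alpha$, variation of constants with $\mathcal{v}(s_+)=0$, reduction to the logistic equation for $w=(s/s_+)^{d-1}$, and the unique critical point $s_*=s_+d^{-1/(d-1)}$ combined with $s'>0$. Your Step~4 only makes explicit the a posteriori check that the paper compresses into its closing remarks: the far-field limits are consistent with the zero constant, and differentiating the first-integral system recovers \eqref{eq:traveling-system}.
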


\begin{proof}
Integrating the first equation of \eqref{eq:traveling-system}, we choose the integration constant to be zero. Its consistency with the far-field behavior will be verified at the end of the construction. Together with the second equation, this gives
\begin{equation}
-c\mathcal{v}=D\mathcal{v}'-\chi\mathcal{v}\frac{s'}{s},
\qquad s'=\frac{k}{c}\mathcal{v}.
\label{eq:first-integral-system}
\end{equation}
On any interval where $\mathcal{v}>0$ and $s>0$, we have $s'>0$ and may regard $\mathcal{v}$ as a function of $s$. The chain rule gives
\begin{equation}
\mathcal{v}'=\frac{d\mathcal{v}}{ds}s'
=\frac{k}{c}\mathcal{v}\frac{d\mathcal{v}}{ds},
\qquad \frac{s'}{s}=\frac{k}{c}\frac{\mathcal{v}}{s}.
\label{eq:chain-rule}
\end{equation}
Substituting \eqref{eq:chain-rule} into the first equality of \eqref{eq:first-integral-system} and dividing by the positive quantity $(k/c)\mathcal{v}$ yields
\begin{equation}
-\frac{c^2}{k}=-\chi\frac{\mathcal{v}}{s}
+D\frac{d\mathcal{v}}{ds}.
\label{eq:profile-before-normalization}
\end{equation}
After division by $D$ and use of \eqref{eq:dimensionless-parameters}, this becomes the first-order linear equation
\begin{equation}
\frac{d\mathcal{v}}{ds}-\frac{d}{s}\mathcal{v}=-\alpha.
\label{eq:linear-profile}
\end{equation}

First solve the associated homogeneous equation. Denote its solution by $\mathcal{v}_{\mathrm{hom}}$, where the subscript stands for ``homogeneous.'' For a nonzero solution, separation of variables and integration give
\begin{equation}
\begin{aligned}
\frac{d\mathcal{v}_{\mathrm{hom}}}{\mathcal{v}_{\mathrm{hom}}}
&=d\frac{ds}{s},\\
\ln|\mathcal{v}_{\mathrm{hom}}|+C_1
&=d(\ln s+C_2),\qquad C_1,C_2\in\mathbb{R}.
\end{aligned}
\label{eq:homogeneous-logarithms}
\end{equation}
Set $C_3:=dC_2-C_1$ and $C_4:=\sigma e^{C_3}$, where $\sigma\in\{-1,1\}$ denotes the sign of the nonzero homogeneous solution. Exponentiation gives $\mathcal{v}_{\mathrm{hom}}=C_4s^d$. The zero solution is included by taking $C_4=0$, so $C_4\in\mathbb{R}$ parameterizes all homogeneous solutions.

Apply variation of constants by replacing the constant $C_4$ with a differentiable function $C_4(s)$:
\begin{equation}
\mathcal{v}(s)=C_4(s)s^d,
\qquad \frac{d\mathcal{v}}{ds}
=C_4'(s)s^d+dC_4(s)s^{d-1}.
\label{eq:variation-ansatz}
\end{equation}
Substitution into \eqref{eq:linear-profile} cancels the terms containing $dC_4(s)s^{d-1}$ and gives
\begin{equation}
C_4'(s)s^d=-\alpha,
\qquad C_4'(s)=-\alpha s^{-d}.
\label{eq:variation-equation}
\end{equation}
Since $d>1$, integration with respect to $s$ yields
\begin{equation}
\begin{aligned}
C_4(s)&=-\alpha\int s^{-d}\,ds+C_5\\
&=-\frac{\alpha}{1-d}s^{1-d}+C_5
=\frac{\alpha}{d-1}s^{1-d}+C_5,
\qquad C_5\in\mathbb{R}.
\end{aligned}
\label{eq:variation-integral}
\end{equation}
Consequently,
\begin{equation}
\mathcal{v}(s)=\frac{\alpha}{d-1}s+C_5s^d.
\label{eq:general-profile-relation}
\end{equation}
The condition $\mathcal{v}\to0$ as $s\to s_+$ determines the constant:
\begin{equation}
C_5=-\frac{\alpha}{(d-1)s_+^{d-1}}<0,
\qquad c_0:=-C_5=\frac{\alpha}{(d-1)s_+^{d-1}}>0.
\label{eq:constant-C5}
\end{equation}
Thus \eqref{eq:profile-relation} and \eqref{eq:exact-v} follow, and $\mathcal{v}>0$ whenever $0<s<s_+$.

Substituting \eqref{eq:exact-v} into the second equality of \eqref{eq:first-integral-system} gives
\begin{equation}
s'=\frac{k\alpha}{c(d-1)}s
\left[1-\left(\frac{s}{s_+}\right)^{d-1}\right].
\label{eq:scalar-s}
\end{equation}
Introduce the dimensionless auxiliary function
\begin{equation}
w=w(\zeta):=\left(\frac{s(\zeta)}{s_+}\right)^{d-1},
\qquad 0<w<1.
\label{eq:w-definition}
\end{equation}
Since $w'/w=(d-1)s'/s$ and $k\alpha/c=c/D$, equation~\eqref{eq:scalar-s} becomes the logistic equation
\begin{equation}
w'=\frac{c}{D}w(1-w).
\label{eq:logistic-w}
\end{equation}
Separating variables and integrating explicitly gives
\begin{equation}
\begin{aligned}
\int\left(\frac1w+\frac1{1-w}\right)dw
&=\frac{c}{D}\int d\zeta,\\
\ln\frac{w}{1-w}&=\frac{c}{D}(\zeta-\zeta_0),
\qquad \zeta_0\in\mathbb{R}.
\end{aligned}
\label{eq:w-integration}
\end{equation}
It follows that
\begin{equation}
w(\zeta)=\left[1+\exp\left(-\frac{c}{D}(\zeta-\zeta_0)\right)\right]^{-1}.
\label{eq:w-solution}
\end{equation}
Taking $A=\exp(c\zeta_0/D)>0$ and returning to the definition of $w$ gives \eqref{eq:exact-s}. Thus, $A$ encodes a spatial translation rather than a change in wave shape.

Formula~\eqref{eq:exact-s} shows that $s$ is smooth, strictly increasing, and satisfies \eqref{eq:far-fields}. Differentiating \eqref{eq:exact-v} with respect to $s$ gives
\begin{equation}
\frac{d\mathcal{v}}{ds}=\frac{\alpha}{d-1}
\left[1-d\left(\frac{s}{s_+}\right)^{d-1}\right].
\label{eq:profile-derivative}
\end{equation}
There is a unique critical point, at which
\begin{equation}
s_*=s_+d^{-1/(d-1)},
\qquad \mathcal{v}_{\max}=\frac{\alpha}{d}s_*.
\label{eq:profile-maximum}
\end{equation}
Because $s'>0$, the profile $\mathcal{v}$ increases up to $s=s_*$ and then decreases. Moreover, $\mathcal{v}\to0$, $\mathcal{v}'\to0$, and $\mathcal{v}s'/s\to0$ in both far fields. These limits confirm the zero integration constant in \eqref{eq:first-integral-system}. Differentiating its first equality recovers \eqref{eq:traveling-system}. Therefore, the constructed pair solves the original system~\eqref{eq:model} for every $(x,t)\in\mathbb{R}^2$ and has all the properties required in the theorem.
\end{proof}

\subsection{Explicit profiles in the material-balance parameters}

The following theorem expresses the same family in the original material-balance parameters and also includes the critical case $d=1$.

\begin{theorem}\label{thm:explicit-physical-parameters}
Let $\tau,\mu,\beta,k,c,s_\infty,C_0>0$, with
\begin{equation}
D=\frac{\mu}{2\tau},
\qquad \chi=\frac{\beta}{\tau},
\qquad d=\frac{\chi}{D}=\frac{2\beta}{\mu}\geq1.
\label{eq:physical-parameter-map}
\end{equation}
Then system~\eqref{eq:model} admits a strong traveling band with profiles
\begin{align}
\mathcal{v}(\zeta)&=C_0s^d(\zeta)
\exp\left(-\frac{2\tau c}{\mu}\zeta\right),
\label{eq:explicit-physical-v}\\
s(\zeta)&=\begin{cases}
\displaystyle\left[
\frac{C_0k\mu(d-1)}{2c^2\tau}
\exp\left(-\frac{2\tau c}{\mu}\zeta\right)
+s_\infty^{1-d}\right]^{-1/(d-1)}, & d>1,\\[1.1em]
\displaystyle s_\infty\exp\left[
-\frac{C_0k\mu}{2c^2\tau}
\exp\left(-\frac{2\tau c}{\mu}\zeta\right)\right], & d=1.
\end{cases}
\label{eq:explicit-physical-s}
\end{align}
In both cases, $s(-\infty)=0$, $s(+\infty)=s_\infty$, and $\mathcal{v}(\pm\infty)=0$.
\end{theorem}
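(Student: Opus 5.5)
The plan is to verify the formulas directly from the zero-flux first integral used in the proof of Theorem~\ref{thm:traveling-band}, written in a form that does not require $d>1$. Put $\lambda:=c/D$. From \eqref{eq:physical-parameter-map} we have $\lambda=2\tau c/\mu$ and $d=2\beta/\mu$. On any interval where $\mathcal{v},s>0$, the first equality of \eqref{eq:first-integral-system} divided by $D\mathcal{v}$ reads
\begin{equation*}
\frac{\mathcal{v}'}{\mathcal{v}}=d\frac{s'}{s}-\lambda .
\end{equation*}
This is equivalent to $\bigl(\mathcal{v}s^{-d}e^{\lambda\zeta}\bigr)'=0$, so $\mathcal{v}=C_0s^de^{-\lambda\zeta}$ with $C_0>0$, which is \eqref{eq:explicit-physical-v}. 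Substituting into $s'=(k/c)\mathcal{v}$ gives the separable (Bernoulli-type) equation
\begin{equation*}
s^{-d}s'=\frac{kC_0}{c}e^{-\lambda\zeta}.
\end{equation*}

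Integrating this equation splits into two cases. For $d>1$, one obtains $s^{1-d}=(d-1)\frac{kC_0D}{c^2}e^{-\lambda\zeta}+K$. Choosing $K=s_\infty^{1-d}$ and using $D=\mu/(2\tau)$ gives exactly the first line of \eqref{eq:explicit-physical-s}. For $d=1$, one obtains $\ln s=-\frac{kC_0D}{c^2}e^{-\lambda\zeta}+\ln s_\infty$, which is the second line. Both formulas define smooth functions with $s>0$ at every finite $\zeta$. I will then run the argument in reverse: for the explicit pair, the quantity $\mathcal{v}s^{-d}e^{\lambda\zeta}$ is constant and $s'=(k/c)\mathcal{v}$, so the first-order system \eqref{eq:first-integral-system} holds identically. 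Differentiating its first equality recovers \eqref{eq:traveling-system}, hence \eqref{eq:model} holds on $\mathbb{R}^2$.

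Next come the far fields and monotonicity. Since $s'=(k/c)\mathcal{v}>0$, $s$ is strictly increasing. From the formulas, $s(+\infty)=s_\infty$, and hence $\mathcal{v}(+\infty)=0$ because $e^{-\lambda\zeta}\to0$; also $s(-\infty)=0$. The left limit of $\mathcal{v}$ requires balancing the growth of $e^{-\lambda\zeta}$ against the decay of $s^d$.
\begin{itemize}
\item For $d>1$, write $B=\frac{C_0k\mu(d-1)}{2c^2\tau}$. Then $s^d\sim (Be^{-\lambda\zeta})^{-d/(d-1)}$, so $\mathcal{v}\sim C_0B^{-d/(d-1)}e^{\lambda\zeta/(d-1)}\to0$.
\item For $d=1$, $\mathcal{v}=C_0s_\infty\exp\bigl(-\lambda\zeta-Be^{-\lambda\zeta}\bigr)$ with $B=\frac{C_0k\mu}{2c^2\tau}$. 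The double-exponential term dominates, so $\mathcal{v}\to0$.
\end{itemize}
The same asymptotics show that $\mathcal{v}'\to0$ and $\mathcal{v}s'/s\to0$, consistent with the zero integration constant.

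The remaining requirement of Definition~\ref{def:strong-traveling-band}, a unique maximum of $\mathcal{v}$, is where I expect the real work, particularly for $d=1$, which Theorem~\ref{thm:traveling-band} does not cover. For $d>1$, I would identify the profile with Theorem~\ref{thm:traveling-band} by taking $s_+=s_\infty$ and $A=Bs_\infty^{d-1}$ and then invoke that result. As an alternative that handles both cases uniformly, use
\begin{equation*}
\frac{\mathcal{v}'}{\mathcal{v}}=d\frac{s'}{s}-\lambda=\frac{dkC_0}{c}s^{d-1}e^{-\lambda\zeta}-\lambda .
\end{equation*}
For $d=1$ the right-hand side is $\frac{kC_0}{c}e^{-\lambda\zeta}-\lambda$. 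This is strictly decreasing, runs from $+\infty$ to $-\lambda<0$, and therefore has exactly one zero, which gives the unique maximum. For $d>1$ the right-hand side equals $d\lambda\frac{Be^{-\lambda\zeta}}{Be^{-\lambda\zeta}+s_\infty^{1-d}}-\lambda$. This is again strictly decreasing, from $(d-1)\lambda>0$ to $-\lambda$, so it also has a single sign change.
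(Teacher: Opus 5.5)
Your proof is correct. Apart from the optional identification with Theorem~\ref{thm:traveling-band} for $d>1$ (which is exactly what the paper does, with $A=Bs_\infty^{d-1}$), it takes a different route.

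\textbf{How the two routes differ.} For $d=1$ the paper repeats its hodograph construction. Taking $s$ as the independent variable, \eqref{eq:linear-profile} gives the phase relation $\mathcal{v}=\alpha s\ln(s_\infty/s)$. Then $y=\ln(s_\infty/s)$ obeys $y'=-(c/D)y$, and writing $\mathcal{v}=\alpha s_\infty ye^{-y}$ places the unique maximum at $y=1$. You instead integrate the first-order system directly in $\zeta$ with the integrating factor $s^{-d}e^{\lambda\zeta}$. The conserved quantity $\mathcal{v}s^{-d}e^{\lambda\zeta}=C_0$ is then literally \eqref{eq:explicit-physical-v}, which also explains why $\mathcal{v}(0)s(0)^{-d}=C_0$. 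The two regimes separate only in the antiderivative of $s^{-d}$ in a single Bernoulli equation.

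\textbf{What each route buys.} Your route avoids the change of variable $\zeta\mapsto s$, the logistic substitution, and any dependence on Theorem~\ref{thm:traveling-band}. It handles the unique maximum in both regimes with one monotone-log-derivative argument, and the reverse check of \eqref{eq:first-integral-system} is immediate. The paper's route gives something yours does not directly: the translation-independent relation between $\mathcal{v}$ and $s$. That relation yields the location and height of the peak in terms of $s$, e.g.\ $s_*=s_\infty d^{-1/(d-1)}$ and $\mathcal{v}_{\max}=\alpha s_*/d$ for $d>1$, or $y=1$, i.e.\ $s_*=s_\infty/e$, for $d=1$.

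\textbf{A computational slip.} This does not affect your conclusion. Since $kC_0/c=B\lambda/(d-1)$, for $d>1$ the logarithmic derivative is $\mathcal{v}'/\mathcal{v}=\frac{d\lambda}{d-1}\cdot\frac{Be^{-\lambda\zeta}}{Be^{-\lambda\zeta}+s_\infty^{1-d}}-\lambda$. Its limit at $-\infty$ is therefore $\lambda/(d-1)$, not $(d-1)\lambda$. This agrees with your own asymptotics $\mathcal{v}\sim C_0B^{-d/(d-1)}e^{\lambda\zeta/(d-1)}$. The function is still strictly decreasing from a positive value to $-\lambda$, so there is exactly one sign change and the unique-maximum argument stands.
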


\begin{proof}
For $d>1$, formulas~\eqref{eq:explicit-physical-v}--\eqref{eq:explicit-physical-s} agree with Theorem~\ref{thm:traveling-band} after setting $s_+=s_\infty$ and
\begin{equation}
A=\frac{C_0kD(d-1)s_\infty^{d-1}}{c^2},
\qquad c_0=\frac{\alpha}{(d-1)s_\infty^{d-1}}.
\label{eq:explicit-constant-map}
\end{equation}
Indeed, factoring $s_\infty^{1-d}$ out of the first branch of \eqref{eq:explicit-physical-s} gives \eqref{eq:exact-s}. Substitution into \eqref{eq:exact-v} then gives \eqref{eq:explicit-physical-v}, because $\alpha A/[(d-1)s_\infty^{d-1}]=C_0$.

For $d=1$, equation~\eqref{eq:linear-profile} and its solution satisfying $\mathcal{v}(s_\infty)=0$ are
\begin{equation}
\frac{d\mathcal{v}}{ds}-\frac{1}{s}\mathcal{v}=-\alpha,
\qquad \mathcal{v}(s)=\alpha s\ln\frac{s_\infty}{s}.
\label{eq:critical-profile-relation}
\end{equation}
To obtain the second equality, divide by $s$ and integrate
$d(\mathcal{v}/s)/ds=-\alpha/s$; the far-field condition determines the integration constant. Writing $y(\zeta)=\ln[s_\infty/s(\zeta)]$ in the second equality of \eqref{eq:first-integral-system} gives $y'=-(c/D)y$. Its positive solutions have the form $y=B\exp(-c\zeta/D)$, with $B>0$. Taking $B=C_0kD/c^2$ gives the $d=1$ branch of \eqref{eq:explicit-physical-s}, and substitution into \eqref{eq:critical-profile-relation} gives \eqref{eq:explicit-physical-v}.

Direct differentiation verifies \eqref{eq:first-integral-system} for this critical branch. Here $s'= (c/D)ys>0$ and $\mathcal{v}=\alpha s_\infty y e^{-y}$. As $\zeta$ increases from $-\infty$ to $+\infty$, $y$ decreases from $+\infty$ to $0$, so the viral profile vanishes in both far fields and has its unique maximum at $y=1$. Both profiles are smooth and positive at finite $\zeta$. The stated limits follow from the explicit formulas, and differentiation recovers the original wave system. This proves the theorem in both regimes.
\end{proof}

\begin{remark}
The notation in \eqref{eq:physical-parameter-map} parameterizes the continuum coefficients: $\mu=2D\tau$ and $\beta=\chi\tau$. Thus $\beta$ here is not the ratio $\chi/D$; that ratio is denoted consistently by $d$. The coefficients $D$ and $\chi$ remain fixed in the formal continuum limit. The parameter $C_0$ determines the translation through \eqref{eq:explicit-constant-map}; in particular, $\mathcal{v}(0)s(0)^{-d}=C_0$.
\end{remark}

The preceding construction is consistent with the traveling-wave analysis in~\cite{HevageIbragimov2026} and the related chemotactic-band profiles in~\cite{keller-segel}. It supplies the exact reference solution for the numerical study: once the coefficients, positive background, wave speed, and translation constant are fixed, both initial profiles and all endpoint traces are determined.

\section{Conservative finite-difference scheme}\label{sec:numerics}

\subsection{Space and time grids}

For numerical verification, use the conservative form~\eqref{eq:continuous-flux} on the finite interval $[-L,L]$. Introduce uniform grids
\begin{equation}
\begin{aligned}
x_i&=-L+i\Delta x,\quad i=0,\ldots,N,
\qquad\Delta x=\frac{2L}{N},\\
t_n&=n\Delta t,\quad n=0,\ldots,M,
\qquad\Delta t=\frac{T}{M}.
\end{aligned}
\label{eq:grids}
\end{equation}
Let $\mathcal{V}_i^n\approx\mathcal{V}(x_i,t_n)$ and $S_i^n\approx S(x_i,t_n)$. To evaluate the logarithmic gradient near the depleted far field, set
\begin{equation}
q_i^n=\ln(S_i^n+\xi),\qquad\xi>0.
\label{eq:regularized-log}
\end{equation}

\subsection{Conservative implicit-explicit approximation}

At the cell interface $x_{i+1/2}$, approximate the directed part of the flux by
\begin{equation}
F_{i+1/2}^n=\chi\frac{\mathcal{V}_i^n+\mathcal{V}_{i+1}^n}{2}
\frac{q_{i+1}^n-q_i^n}{\Delta x}.
\label{eq:discrete-drift-flux}
\end{equation}
The diffusion term is evaluated at the new time level and the directed flux at the previous level. The resulting implicit-explicit (IMEX) scheme at interior nodes is
\begin{equation}
\begin{aligned}
\frac{\mathcal{V}_i^{n+1}-\mathcal{V}_i^n}{\Delta t}
&=D\frac{\mathcal{V}_{i+1}^{n+1}-2\mathcal{V}_i^{n+1}
+\mathcal{V}_{i-1}^{n+1}}{\Delta x^2}\\
&\quad-\frac{F_{i+1/2}^n-F_{i-1/2}^n}{\Delta x},
\qquad i=1,\ldots,N-1.
\end{aligned}
\label{eq:imex-scheme}
\end{equation}
Writing $r=D\Delta t/\Delta x^2$, equation~\eqref{eq:imex-scheme} is equivalent to the tridiagonal system
\begin{equation}
\begin{aligned}
-r\mathcal{V}_{i-1}^{n+1}+(1+2r)\mathcal{V}_i^{n+1}
-r\mathcal{V}_{i+1}^{n+1}
&=\mathcal{V}_i^n\\
&\quad-\frac{\Delta t}{\Delta x}
\left(F_{i+1/2}^n-F_{i-1/2}^n\right).
\end{aligned}
\label{eq:tridiagonal-system}
\end{equation}
The prescribed endpoint values at $t_{n+1}$ are transferred to the right-hand side in the first and last interior equations. The system is solved by the Thomas algorithm. Susceptible density is then updated locally:
\begin{equation}
S_i^{n+1}=S_i^n-k\Delta t\,\mathcal{V}_i^{n+1}.
\label{eq:S-update}
\end{equation}
The implementation applies a lower bound $\xi$ to this provisional update at interior nodes, namely $S_i^{n+1}\leftarrow\max\{S_i^{n+1},\xi\}$, and then sets the endpoint values from the exact solution. This safeguard affects only extremely small susceptible values in the depleted far field in the present experiment. It is a numerical regularization of \eqref{eq:S-update}, not an additional term in the continuous model. Conservative flux differences in the viral update ensure that interior interface contributions cancel when summed over the grid.

\subsection{Initial and boundary conditions}

The computation uses the material-balance parameters
\begin{equation}
C_0=s_\infty=\tau=\mu=k=1,
\qquad\beta=c=2,
\qquad L=50,\qquad T=2.
\label{eq:numerical-parameters}
\end{equation}
Equations~\eqref{eq:physical-parameter-map} and \eqref{eq:explicit-constant-map} give
\begin{equation}
D=\frac12,\qquad\chi=2,\qquad d=\frac{\chi}{D}=4,
\qquad\alpha=\frac{c^2}{kD}=8,\qquad A=\frac38.
\label{eq:derived-numerical-parameters}
\end{equation}
Thus the exact profiles at $t=0$ are
\begin{equation}
\mathcal{v}_0(x)=\frac{e^{-4x}}
{\left(1+\frac38e^{-4x}\right)^{4/3}},
\qquad s_0(x)=\frac{1}
{\left(1+\frac38e^{-4x}\right)^{1/3}},
\label{eq:initial-profiles}
\end{equation}
and the exact solution is
\begin{equation}
\mathcal{V}_{\mathrm{ex}}(x,t)=\mathcal{v}_0(x-2t),
\qquad S_{\mathrm{ex}}(x,t)=s_0(x-2t).
\label{eq:exact-numerical-solution}
\end{equation}
The initial and boundary data are fully consistent with this solution. We solve the complete initial-boundary value problem
\begin{equation}
\boxed{\begin{aligned}
\mathcal{V}_t&=\frac12\mathcal{V}_{xx}
-2\partial_x\left(\mathcal{V}\frac{S_x}{S}\right),
&&-50<x<50,\quad0<t\le2,\\
S_t&=-\mathcal{V},&&-50<x<50,\quad0<t\le2,\\
\mathcal{V}(x,0)&=\mathcal{v}_0(x),\quad S(x,0)=s_0(x),
&&-50\le x\le50,\\
\mathcal{V}(-50,t)&=\mathcal{v}_0(-50-2t),
&&0<t\le2,\\
S(-50,t)&=s_0(-50-2t),&&0<t\le2,\\
\mathcal{V}(50,t)&=\mathcal{v}_0(50-2t),&&0<t\le2,\\
S(50,t)&=s_0(50-2t),&&0<t\le2.
\end{aligned}}
\label{eq:complete-ibvp}
\end{equation}
Initial conditions prescribe the profiles throughout the spatial interval at $t=0$, whereas Dirichlet conditions prescribe time-dependent values at both endpoints. Because $S$ has no spatial evolution term, its endpoint traces are supplied consistently with its local depletion law and the exact viral data.

The calculation uses $N=800$, $\Delta x=0.125$, $M=854$, $\Delta t=2/854\approx2.342\times10^{-3}$, and $\xi=10^{-30}$. In the code, the step is selected from the initial discrete drift bound: with $b_{i+1/2}^{0}=\chi(q_{i+1}^0-q_i^0)/\Delta x$, a provisional step is $0.05\Delta x/\max_i|b_{i+1/2}^{0}|$, followed by $M=\lceil T/\Delta t_{\mathrm{provisional}}\rceil$ and $\Delta t=T/M$. This describes the time-step selection used in the experiment; it is not a proof of unconditional stability for the centered explicit flux.

\subsection{Error measures}

Comparison with the exact solution uses the discrete norms
\begin{equation}
\begin{aligned}
E_{\mathcal{V}}^{\infty}(t_n)
&=\max_{0\le i\le N}
\left|\mathcal{V}_i^n-\mathcal{V}_{\mathrm{ex}}(x_i,t_n)\right|,\\
E_S^{\infty}(t_n)
&=\max_{0\le i\le N}
\left|S_i^n-S_{\mathrm{ex}}(x_i,t_n)\right|,
\end{aligned}
\label{eq:max-errors}
\end{equation}
and
\begin{equation}
\begin{aligned}
E_{\mathcal{V}}^{2}(t_n)
&=\left(\Delta x\sum_{i=0}^{N}
\left|\mathcal{V}_i^n-\mathcal{V}_{\mathrm{ex}}(x_i,t_n)\right|^2\right)^{1/2},\\
E_S^{2}(t_n)
&=\left(\Delta x\sum_{i=0}^{N}
\left|S_i^n-S_{\mathrm{ex}}(x_i,t_n)\right|^2\right)^{1/2}.
\end{aligned}
\label{eq:l2-errors}
\end{equation}
The superscript $2$ denotes the discrete $L^2$ norm, not the square of an error.

\section{Numerical verification results}\label{sec:results}

Figures~\ref{fig:snapshots-V} and \ref{fig:snapshots-S} show the numerical and exact solutions at $t=0$, approximately $0.501$, $1$, and $2$. Solid curves represent the numerical solution and dashed curves represent \eqref{eq:exact-numerical-solution}; the legend identifies the time associated with each curve. The profiles nearly coincide at the scale of the figures. The viral component retains its localized single-hump profile, the susceptible component remains monotone, and both profiles move to the right with the prescribed speed $c=2$. The plots show the full computational interval $[-50,50]$.

\begin{figure}[htbp]
\centering
\includegraphics[width=0.90\textwidth]{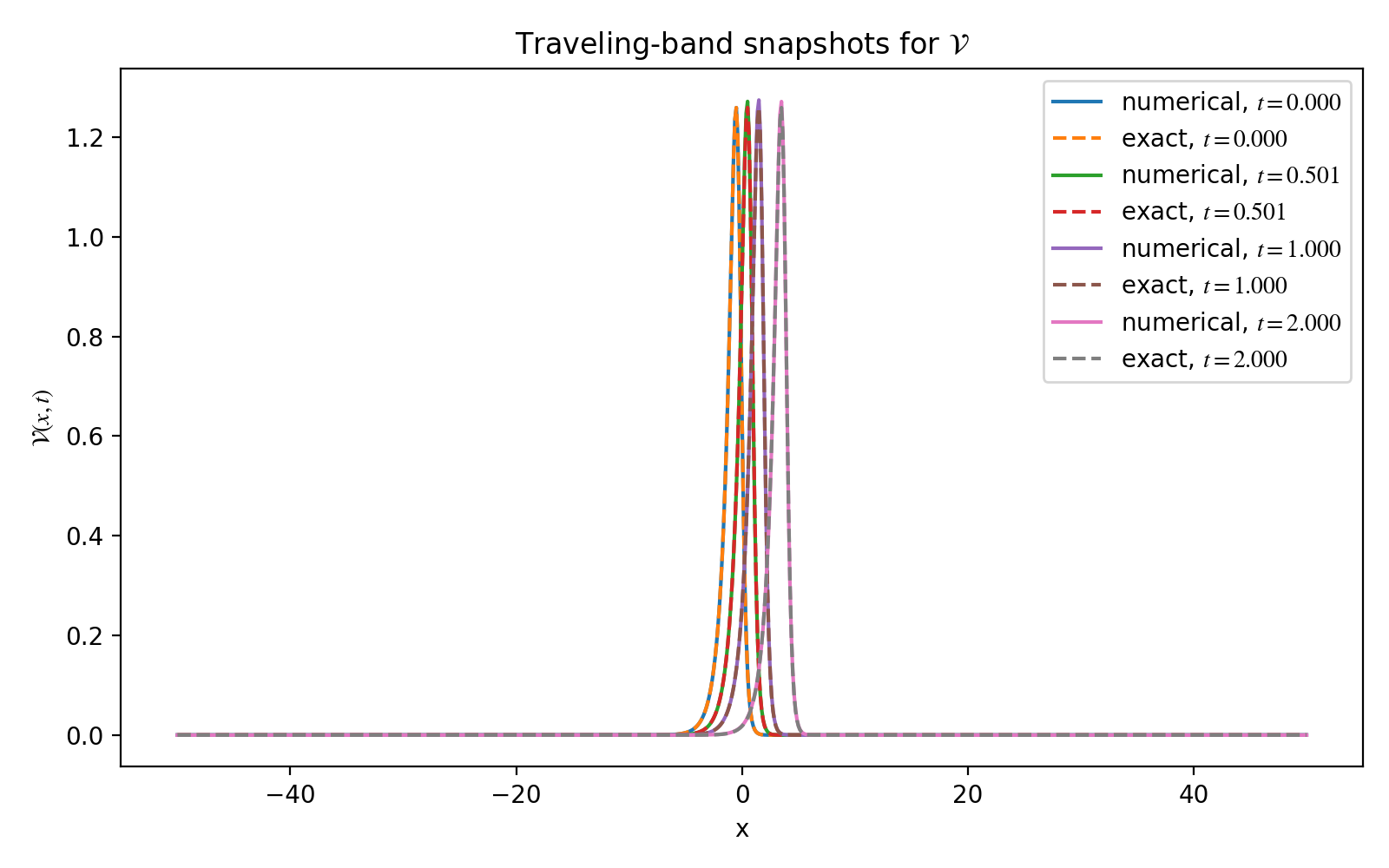}
\caption{Viral-density profiles $\mathcal{V}(x,t)$ at $t=0$, approximately $0.501$, $1$, and $2$ for problem~\eqref{eq:complete-ibvp} on $[-50,50]$. Solid curves show the conservative IMEX solution on the grid with $N=800$; dashed curves show the exact traveling band. The legend identifies the numerical and exact curves at each time. The peak moves to the right with speed $c=2$ while its localized shape is retained.}
\label{fig:snapshots-V}
\end{figure}

\begin{figure}[htbp]
\centering
\includegraphics[width=0.90\textwidth]{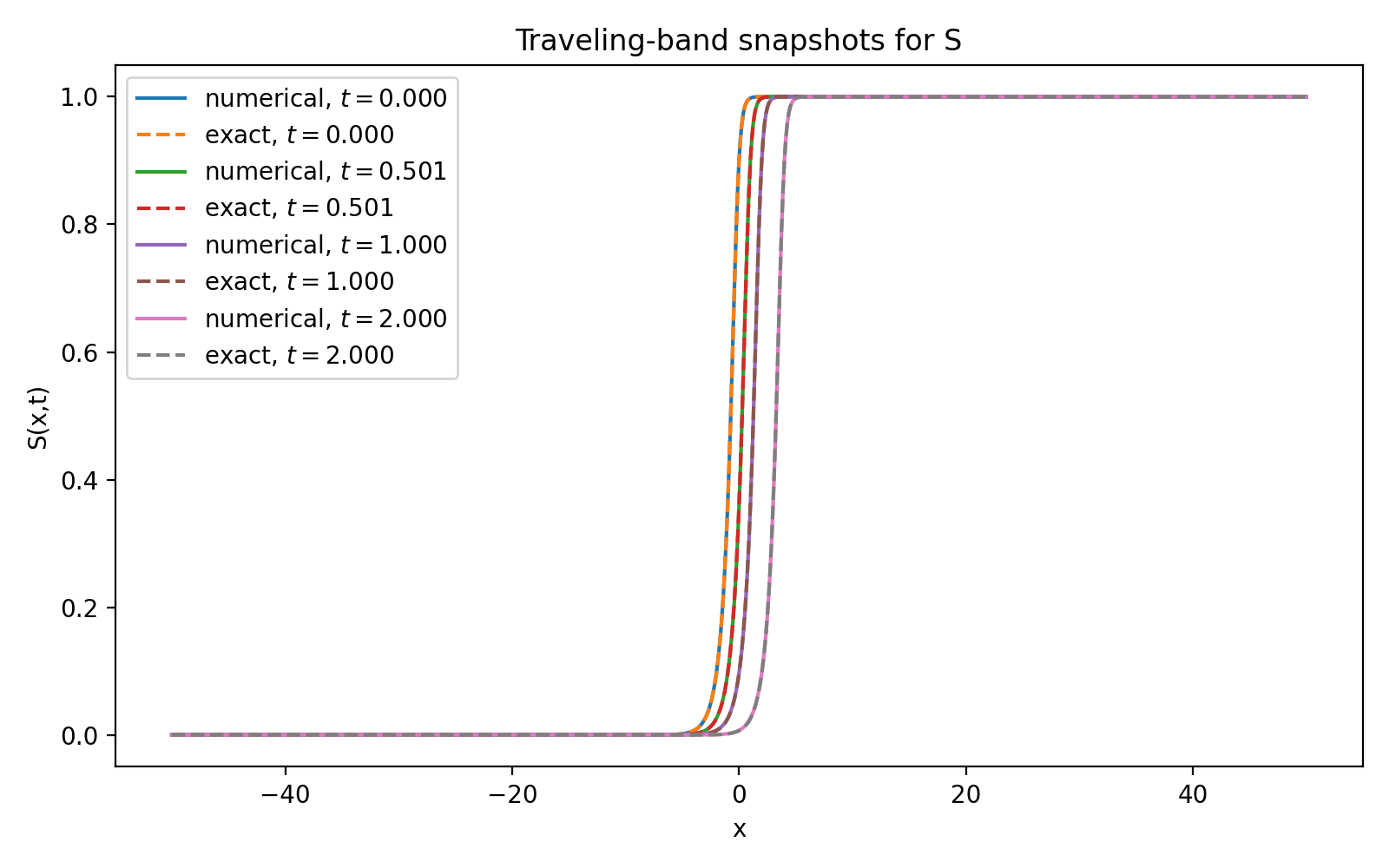}
\caption{Susceptible-population profiles $S(x,t)$ at $t=0$, approximately $0.501$, $1$, and $2$ for problem~\eqref{eq:complete-ibvp} on $[-50,50]$. Solid curves show the numerical solution and dashed curves the exact solution, with the times specified in the legend. The monotone transition from the nearly depleted left state to the right background $S=1$ moves to the right without appreciable distortion.}
\label{fig:snapshots-S}
\end{figure}

Figures~\ref{fig:error-V} and \ref{fig:error-S} show the time histories of the errors in \eqref{eq:max-errors} and \eqref{eq:l2-errors}. Over $0\le t\le2$, their maxima are
\begin{align}
\max_n E_{\mathcal{V}}^{\infty}(t_n)&=1.6985\times10^{-2},
\qquad\max_n E_{\mathcal{V}}^{2}(t_n)=1.2743\times10^{-2},
\label{eq:V-error-values}\\
\max_n E_S^{\infty}(t_n)&=2.2239\times10^{-3},
\qquad\max_n E_S^{2}(t_n)=2.0240\times10^{-3}.
\label{eq:S-error-values}
\end{align}
The mild oscillations in the uniform viral-density error are consistent with the motion of a narrow peak relative to the fixed grid nodes. No sustained error growth is observed in this calculation: after reaching their maxima, the errors remain bounded and decrease toward the end of the run. This is an observation for the stated grid, parameters, and time interval, rather than a general stability or convergence result.

\begin{figure}[htbp]
\centering
\includegraphics[width=0.85\textwidth]{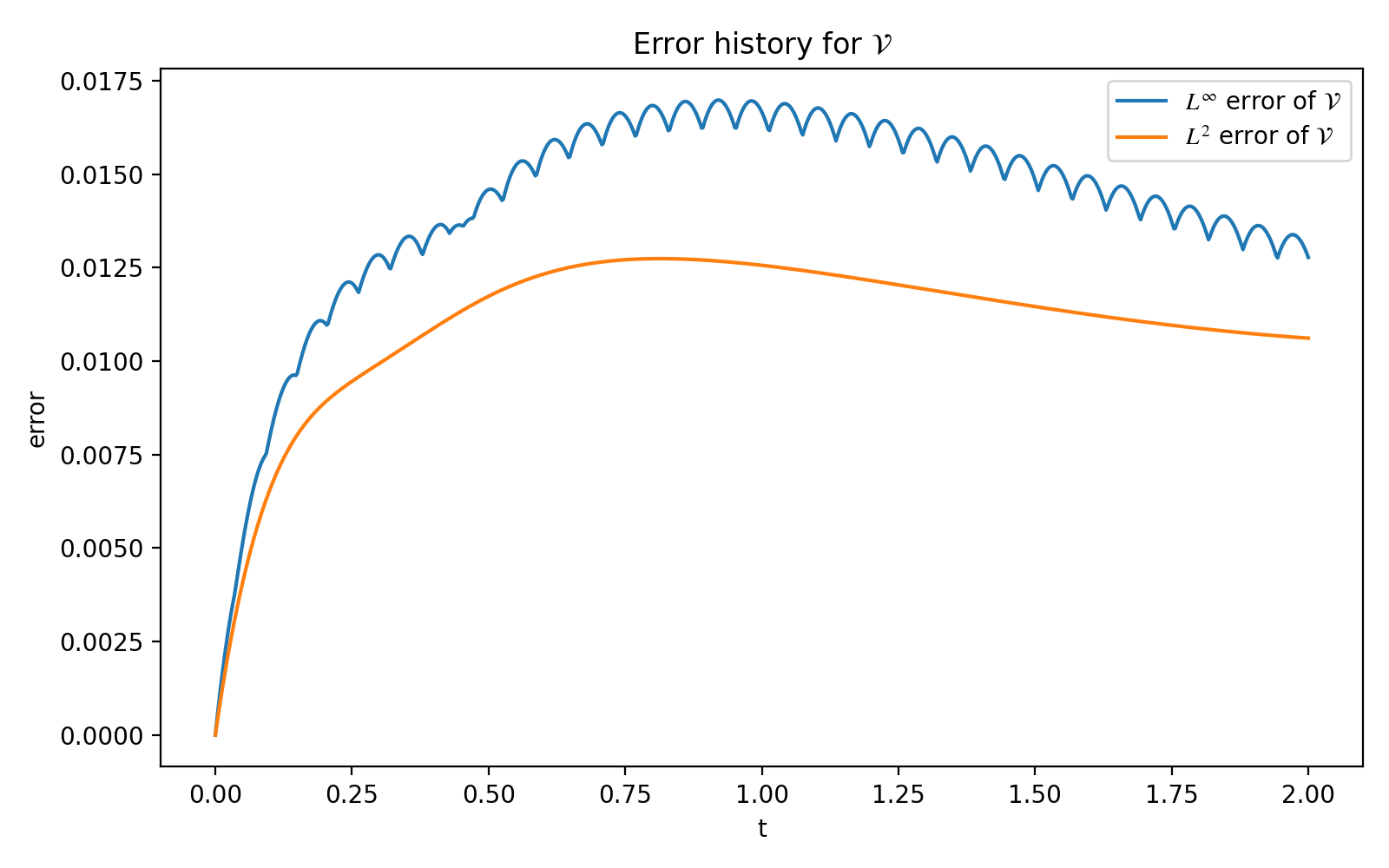}
\caption{Discrete viral-density errors versus time on the grid with $N=800$. The blue curve is $E_{\mathcal{V}}^{\infty}$ and the orange curve is $E_{\mathcal{V}}^{2}$. Their maximum values are $1.6985\times10^{-2}$ and $1.2743\times10^{-2}$, respectively. Both are evaluated against the exact solution~\eqref{eq:exact-numerical-solution}.}
\label{fig:error-V}
\end{figure}

\begin{figure}[htbp]
\centering
\includegraphics[width=0.85\textwidth]{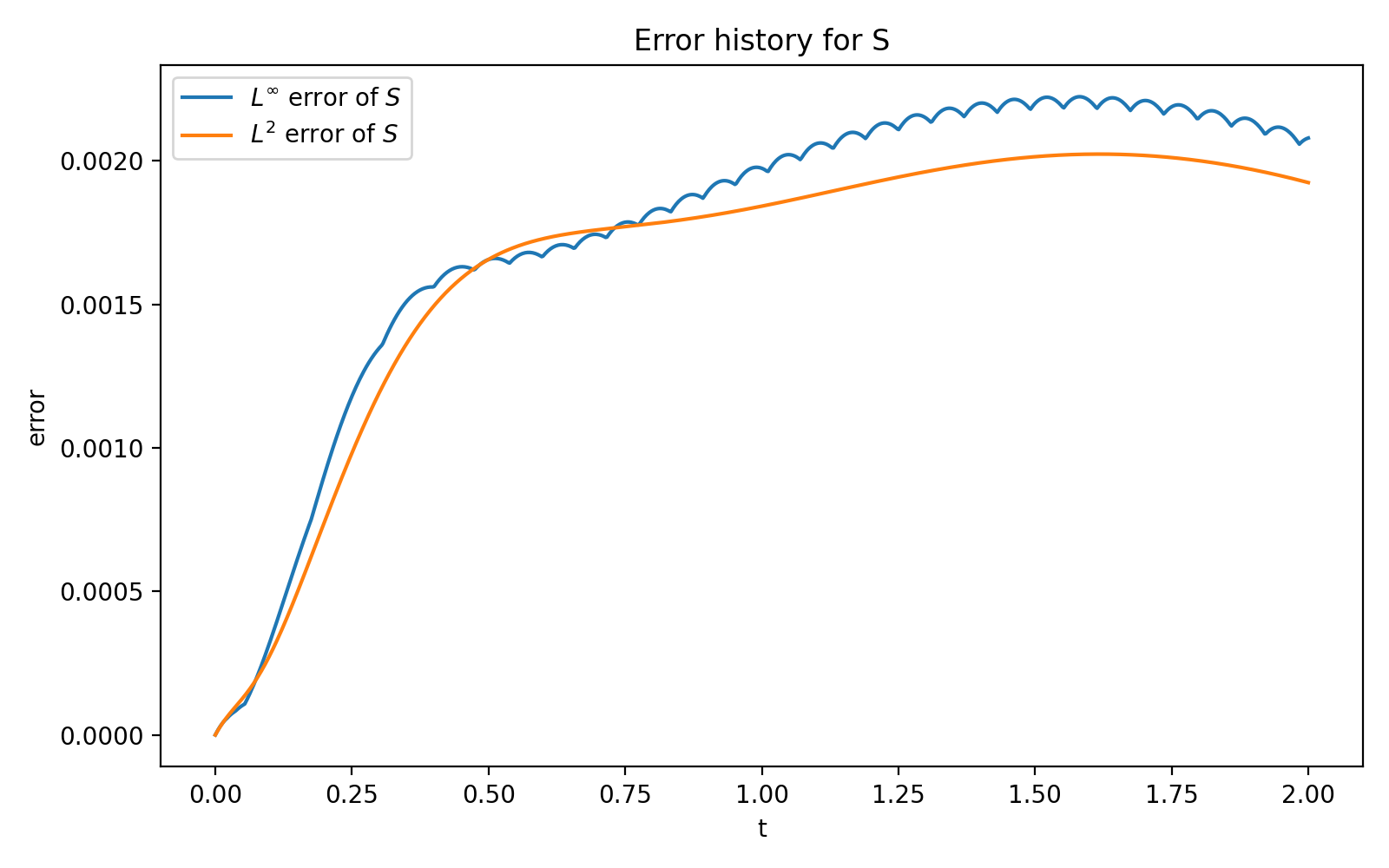}
\caption{Discrete susceptible-density errors versus time on the grid with $N=800$. The blue curve is $E_S^{\infty}$ and the orange curve is $E_S^{2}$. Their maximum values are $2.2239\times10^{-3}$ and $2.0240\times10^{-3}$, respectively. The initial and endpoint values are taken from the same exact traveling wave.}
\label{fig:error-S}
\end{figure}

The analytical proof establishes the exact traveling wave, while the computation checks its reproduction by the discrete method with fully Dirichlet data. The agreement of the profiles and the reported error norms support the consistency of the formulas, endpoint traces, and implementation for this benchmark. They quantify the approximation error over the stated time interval. Biological validation, perturbation stability, and a numerical convergence rate require different evidence and are outside the scope of this calculation.

\section{Conclusion}\label{sec:conclusion}

We have examined a conservative diffusion-advection system that couples mobile viral transport to local depletion of a stationary susceptible population. An Einstein-type material balance and a relative-gradient closure provide the modeling chain. Within this setting, a localized viral band propagates through the susceptible background together with a monotone depletion front.

The complete analytical construction, consistent with~\cite{HevageIbragimov2026}, gives positive entire waves for $\chi/D>1$ and an explicit critical branch for $\chi/D=1$. Viral density has a unique maximum and vanishes in both far fields. Susceptible density connects the depleted state to a positive background. For $\chi/D>1$, reduction to a linear equation for $\mathcal{v}(s)$ followed by a logistic equation for $w$ makes the integration constants and parameter relations explicit. The critical branch follows from the logarithmic profile relation. In each case, the wave speed is prescribed and the positive translation parameter fixes the location of the profiles.

The numerical benchmark links this construction to a conservative IMEX implementation through exact initial profiles and compatible endpoint values. With eight hundred spatial subintervals, the largest uniform errors over $0\le t\le2$ are of order $10^{-2}$ for viral density and $10^{-3}$ for susceptible density. The results demonstrate agreement with the prescribed traveling wave on the reported grid and provide a reference case for assessing implementations of the coupled transport-depletion system.

The biomedical interpretation is mechanistic: the model is not calibrated against medical data and omits susceptible movement, additional clinical compartments, recovery, and viral removal. For more general initial data, the law $S_t=-k\mathcal{V}$ requires particular care because it does not guarantee global positivity of susceptible density. Natural extensions include biologically constrained depletion laws, heterogeneous transport coefficients, and other boundary conditions. Systematic grid refinement and stability analysis would assess the numerical method beyond the exact-solution benchmark presented here.

\section*{Funding}

This work was supported by the Oil and Gas Research Institute of the Russian Academy of Sciences (OGRI RAS), grant No.~122022800272-4.

\section*{Acknowledgments}

The authors thank Isanka Garli Hevage for his contribution to the work of Hevage and Ibragimov~\cite{HevageIbragimov2026}, which provides the theoretical basis for the traveling-wave analysis and proofs presented here. And the authors acknowledge the use of ChatGPT-6.1 Sol  to assist with translating parts of the Russian manuscript into English.

\end{document}